\newcommand{\D}{{\mathop{}\!\mathrm{d}}} 
\newcommand{\R}{\mathbb{R}}
\newcommand{\N}{\mathbb{N}}
\newcommand{\PP}{\mathbb{P}}
\newcommand{\E}{\mathbb{E}}
\newcommand{\T }{\mathcal{T}}
\newcommand{\A}{\mathcal{A}}
\newcommand{\X}{\mathcal{X}}
\newcommand{\one}{ 1 \hspace{-3pt} \mathrm{l}} %
\newcommand{\ab}{{\mathbf{a}}}
\numberwithin{equation}{section}  
\newtheorem{defn}{Definition}[section]
\newtheorem{exa}[defn]{Example}
\newtheorem{rem}[defn]{Remark}
\newtheorem{thm}[defn]{Theorem}
\newtheorem{lem}[defn]{Lemma}
\newtheorem{s_asu}[defn]{Standing Assumption}
\title[Bounding the Values of Markov Decision Problems]{Bounding the Difference between the Values of Robust and Non-Robust Markov Decision Problems}
\author[A. Neufeld, J. Sester]{ Ariel Neufeld$^{1}$, Julian Sester$^{2}$}
\begin{document}

\vspace*{-0.7cm}
\maketitle

\begin{center}
\normalsize{\today} \\ \vspace{0.5cm}
\small\textit{$^{1}$NTU Singapore, Division of Mathematical Sciences,\\ 21 Nanyang Link, Singapore 637371.\\
$^{2}$National University of Singapore, Department of Mathematics,\\ 21 Lower Kent Ridge Road, 119077.}                                                                                                                              
\end{center}

\begin{abstract}~
In this note we provide an upper bound for the difference between the value function of a distributionally robust Markov decision problem and the value function of a non-robust Markov decision problem, where the ambiguity set of probability kernels of the   distributionally robust Markov decision process is described by a Wasserstein-ball around some reference kernel whereas the non-robust Markov decision process behaves according to a fixed probability kernel contained in the ambiguity set. Our derived upper bound for the difference between the value functions is dimension-free and depends linearly on the radius of the Wasserstein-ball.
 \\
 \\
\textbf{Keywords: }{Markov Decision Process, Wasserstein Uncertainty, Distributionally Robust Optimization, Reinforcement Learning}
\end{abstract}

\section{Introduction}
Markov decision processes enable to model non-deterministic interactions between an agent and its environment within a tractable stochastic framework. At each time $t$ an agent observes the current state and takes an action which leads to an immediate reward. The goal of the agent then is to optimize its expected cumulative reward. Mathematically, Markov decision problems are solved based on a dynamic programming principle, whose framework builds the fundament of many reinforcement learning algorithms such as, e.g., the $Q$-learning algorithm. We refer to \cite{bauerle2011markov}, \cite{feinberg2012handbook}, \cite{puterman1990markov}, \cite{puterman2014markov} for the theory of Markov decision processes and to \cite{angiuli2021reinforcement}, 
\cite{cao2021deep}, \cite{charpentier2021reinforcement},
\cite{kallus2020double},
\cite{kaelbling1996reinforcement},
\cite{levin1998using},
\cite{natarajan2022planning},
\cite{sutton2018reinforcement},
\cite{white1993survey}  for their applications, especially in the field of reinforcement learning. 

In the classical setup for Markov decision problems, the transition kernel describing the transition probabilities of the underlying Markov decision processes is given. Economically, this means that the agent possesses the knowledge of the true distribution of the underlying process, an assumption which typically cannot be justified in practice. To address this issue, academics have recently introduced a robust version of the Markov decision problem accounting for a possible misspecification of the assumed underlying probability kernel that describes the dynamics of the state process. Typically, one assumes that the agent possesses a good guess of the true but to the agent unknown probability kernel, but due to her uncertainty decides to consider the worst case among all laws which lie within a ball of certain radius around the estimated probability kernel with respect to some distance, e.g. the Wasserstein distance or the KL-distance. We refer to \cite{bauerle2021distributionally_2}, \cite{bauerle2021q}, \cite{chen2019distributionally},
\cite{ElGhaouiNilim2005robust},
\cite{li2023policy}, \cite{liu2022distributionally},
\cite{mannor2016robust}, \cite{neufeld2022robust}, \cite{neufeld2023markov},  \cite{panaganti2022sample}, \cite{si2020distributional}, \cite{si2020distributionally},  \cite{uugurlu2018robust}, \cite{wang2022policy}, \cite{wiesemann2013robust}, \cite{wiesemann2014distributionally}, \cite{xu2010distributionally}, \cite{yang2021towards}, and \cite{zhou2021finite} for robust Markov decision problems and corresponding reinforcement learning based algorithms to solve them.

In this note, the goal is to analyze the difference between the value function of the corresponding Markov decision problem with respect to the true (but to the agent unknown) probability kernel and the one of the robust Markov decision problem defined with respect to some Wasserstein-ball around the by the agent estimated transition kernel. Note that the estimated transition kernel does not necessarily need to coincide with the true probability kernel, however we assume that the agent’s guess is good enough that the true probability kernel lies within the Wasserstein-ball around the estimated probability kernel.

Similar, while not identical, research questions have been studied in
\cite{bartl2022sensitivity}, \cite{kiszka2022stability}, \cite{kern2020sensitivity}, \cite{muller1997does}, \cite{zahle2022concept} mainly focussing on establishing stability results for value functions w.r.t.\,the choice of the underlying transition probability. In \cite[Theorem 4.2]{muller1997does}, the author presents a state-dependent bound on the difference between iterations of value functions (obtained via the so called value iteration algorithm) of two Markov decision processes implying that these iterations depend continuously on the transition kernels. As a refinement of \cite[Theorem 4.2]{muller1997does} and also of the related result obtained in  \cite[Theorem 2.2.8]{kern2020sensitivity}, the result from \cite[Theorem 6.2]{zahle2022concept} shows that in a finite time horizon setting the difference between the value functions of two Markov decision processes with different transition probabilities can be bounded by an expression depending on a certain tailored distance between the transition probabilities. In \cite{kiszka2022stability} the author proposes a semi-metric for Markov processes which allows to determine bounds for certain types of linear stochastic optimization problems, compare  \cite[Theorem 3]{kiszka2022stability}.  The authors from \cite{{bartl2022sensitivity}} study the sensitvity of multi-period stochastic optimization problems over a finite time horizon w.r.t.\,the underlying probability distribution in the so called adapted Wasserstein distance. They show in \cite[Theorem 2.4]{bartl2022sensitivity}  that the value function of their robust optimization problem with corresponding ambiguity set being a Wasserstein-ball around a reference measure can be approximated by the corresponding value function of the non-robust optimization problem defined with respect to the reference measure plus an explicit correction term. Vaguely speaking (as the optimization problem in \cite{bartl2022sensitivity} is technically speaking not comparable to our setting), this is similar to our analysis in the special case where our reference measure coincides with the true measure.

Under some mild assumptions, we obtain in Theorem~\ref{thm_main_result} an \textit{explicit} upper bound for the difference between the value function of the robust and the non-robust Markov decision problem which only depends on the radius $\varepsilon$ of the Wasserstein-ball, the discount factor $\alpha$, and the Lipschitz constants of the reward function and the true transition kernel. In particular, we obtain that the difference of the two value functions only grows \textit{at most linearly in the radius} $\varepsilon$ and \textit{does not depend on the dimensions of the underlying state and action space}.

The remainder of this note is as follows. In Section~\ref{sec_setting} we introduce the underlying setting which is used to derive our main result reported in Section~\ref{sec_main}. The proof of the main result and auxiliary results necessary for the proof are reported in Section~\ref{sec_proofs}.

\section{Setting}\label{sec_setting}

We first present the underlying setting to define both robust and non-robust Markov decision processes which we then use to compare their respective value functions. 
\subsection{Setting}\label{subsec_setting}
As state space we consider a closed subset $\X \subseteq\R^d$ for some $d\in \N$, equipped with its Borel $\sigma$-field $\mathcal{F}_{\X}$, which we use to define the infinite Cartesian product
\[
\Omega:=\X^{\N_0}=\X\times \X \times \cdots
\]
and the $\sigma$-field $\mathcal{F}:={\mathcal{F}_{\X}}\otimes \mathcal{F}_{\X} \otimes \cdots$.
For any $q\in \N$, we denote by $\mathcal{M}_1^q(\X)$ the set of probability measures on $\X$ with finite $q$-moments and write $\mathcal{M}_1(\X):=\mathcal{M}_1^1(\X)$ for brevity.
We define on $\Omega$ the  infinite horizon stochastic process $\left(X_{t}\right)_{t\in \N_0}$  via the canonical process $X_t(\left(\omega_0,\omega_1,\dots,\omega_t,\dots\right)):=\omega_t$ for $(\omega_0,\omega_1,\dots,\omega_t,\dots) \in \Omega$,  $t \in \N_0$.

To define the set of controls (also called actions) we fix a compact set $A \subseteq \R^m$ for some $m \in \N$, and set
\begin{align*}
\mathcal{A}:&=\left\{\ab=(a_t)_{t \in \N_0}~\middle|~(a_t)_{t \in \N_0}: \Omega \rightarrow A;~a_t \text{ is } \sigma(X_{t})\text{-measurable} \text{ for all } t \in \N_0 \right\}\\
&=\left\{\left(a_t(X_t)\right)_{t\in \N_0}~\middle|~ a_t:\X \rightarrow A \text{ Borel measurable for all } t \in \N_0 \right\}.
\end{align*}

Next, we define the $q$-Wasserstein-distance $d_{W_q}(\cdot,\cdot)$ for some  $q\in \N$. For any $\PP_1,\PP_2 \in \mathcal{M}_1^q(\X)$ let $d_{W_q}(\PP_1,\PP_2)$ be defined as 
\[
d_{W_q}(\PP_1,\PP_2):=\left(\inf_{\pi \in \Pi(\PP_1,\PP_2)}\int_{\X \times \X} \|x-y\|^q \D \pi(x,y)\right)^{1/q},
\]
where $\|\cdot\|$ denotes the Euclidean norm on $\R^d$, and where $\Pi(\PP_1,\PP_2)$ denotes the set of joint distributions of $\PP_1$ and $\PP_2$. Moreover, we denote by $\tau_q$ the Wasserstein $q$ - topology induced by the convergence w.r.t.\,$d_{W_q}$.


To define an ambiguity set of probability kernels, we first fix  throughout this paper some  $q\in \N$ and $\varepsilon>0$. Then, we define as ambiguity set of probability kernels
\begin{equation}\label{eq_definition_wasserstein_ball}
\X \times A  \ni (x,a) \twoheadrightarrow  \mathcal{P}(x,a):=\mathcal{B}^{(q)}_\varepsilon\left(\widehat{\PP}(x,a)\right):=\left\{\PP\in \mathcal{M}_1(\X)~\middle|~d_{W_q}(\PP,\widehat{\PP}(x,a)) \leq  \varepsilon \right\}
\end{equation}
with respect to some center 
$$\X \times A \ni (x,a) \mapsto \widehat{\PP}(x,a) \in  (\mathcal{M}^q_1(\X),\tau_q),$$ 
meaning that $\mathcal{B}^{(q)}_\varepsilon\left(\widehat{\PP}(x,a)\right)$ denotes the $q$-Wasserstein-ball (also called Wasserstein-ball of order $q$) with $\varepsilon$-radius and center $\widehat{\PP}(x,a)$.

Under these assumptions we define for every $x \in \X, \ab \in \mathcal{A}$ the set of admissible measures on $(\Omega, \mathcal{F})$ by
\begin{align*}
\mathfrak{P}_{x,\ab}:=\bigg\{\delta_x \otimes \PP_0\otimes \PP_1 \otimes \cdots~\bigg|~&\text{ for all } t \in \N_0:~\PP_t:\X \rightarrow \mathcal{M}_1(\X) \text{ Borel-measurable, } \\ 
&\text{ and }\PP_t(\omega_t) \in \mathcal{P}\left(\omega_t,a_t(\omega_t)\right)\text{ for all } \omega_t\in \X \bigg\},
\end{align*}
where the notation $\PP=\delta_x \otimes\PP_0\otimes \PP_1 \otimes\cdots \in \mathfrak{P}_{x,\ab}$ abbreviates
\[
\PP(B):=\int_{\X}\cdots \int_{\X} \cdots \one_{B}\left((\omega_t)_{t\in \N_0}\right) \cdots \PP_{t-1}(\omega_{t-1};\D\omega_t)\cdots \PP_0(\omega_0;\D\omega_1) \delta_x(\D \omega_0),\qquad B \in \mathcal{F}.
\]

\subsection{Problem Formulation and Standing Assumptions}
Let $r:\X \times A \times \X \rightarrow \R$ be some \emph{reward function}. We assume from now on that it fulfils the following assumptions.
\begin{s_asu}[Assumptions on the reward function]\label{asu_2}~
\\
The reward function $r:\X \times A \times \X \rightarrow \R$ satisfies the following.
\begin{itemize}
\item[(i)] The map 
\[
\X \times A \times \X \ni (x_0,a,x_1) \mapsto r(x_0,a,x_1)\in \R
\]
is Lipschitz continuous with constant $L_r>0$.
\item[(ii)] If $\X$ is unbounded and $q\in \N$ defined in \eqref{eq_definition_wasserstein_ball} satisfies $q=1$, then we additionally assume that
\begin{equation*}\label{eq_c_bounded}
\sup_{x_0,x_1\in \X, a\in A} |r(x_0,a,x_1)|<\infty.
\end{equation*}
\end{itemize}
\end{s_asu}
Note that Assumption~\ref{asu_2}(i) implies that the reward $r$ is bounded whenenver $\X$ is bounded. 
Next, we impose the following standing assumption on our reference probability kernel modeled by the center of the $q$-Wasserstein ball.
\begin{s_asu}[Assumption on the center of the ambiguity set]\label{asu_p}~
	Let $q\in \N$ be defined in \eqref{eq_definition_wasserstein_ball}. Then the center $\X \times A \ni (x,a) \mapsto \widehat{\PP}(x,a) \in  (\mathcal{M}^q_1(\X),\tau_q)$ satisfies the following.
	\begin{itemize}
		\item [(i)] The map $\X \times A \ni (x,a) \mapsto \widehat{\PP}(x,a) \in  (\mathcal{M}^q_1(\X),\tau_q)$ is continuous.
		\item [(i+)] If 
		the reward function $r$ is unbounded, 
		then we assume instead of $(i)$ the stronger assumption that $\widehat{\PP}$ is Lipschitz continuous, i.e.\ that there exists $L_{\widehat{\PP}}>0$ such that 
		\begin{equation*}
		d_{W_q}(\widehat{\PP}(x,a),\widehat{\PP}(x',a'))\leq L_{\widehat{\PP}}\big(\Vert x-x' \Vert + \Vert a-a' \Vert\big) \text{ for all } x,x' \in \X,~a,a'\in A.	
		\end{equation*}
	\end{itemize}
\end{s_asu}
Finally, we assume the following on the discount factor $\alpha\in (0,1)$.
\begin{s_asu}[Assumption on the discount factor]\label{asu_alpha}
	Let $q\in \N$, $\varepsilon>0$ be defined in \eqref{eq_definition_wasserstein_ball} and $\X \times A \ni (x,a) \mapsto \widehat{\PP}(x,a) \in  (\mathcal{M}^q_1(\X),\tau_q)$ be defined in Assumption~\ref{asu_p}. Then the discount factor $\alpha$ satisfies $0<\alpha<\frac{1}{C_P}$, where $1\leq C_P<\infty$ is defined by
\begin{align*}
C_P=	\begin{cases}
\max\Bigg\{	1+	\varepsilon
+
\sup\limits_{a\in A}\inf\limits_{x\in \X}\bigg\{
\displaystyle\int\limits_{\X }\|z\|\, \widehat{\PP}(x,a) (\D z) 
+
L_{\widehat{\PP}}\|x\|
\bigg\}, L_{\widehat{\PP}}\Bigg\}  & \mbox{if the reward $r$ is unbounded;}\\
		1 & \mbox{else.}	
	\end{cases}
\end{align*}
\end{s_asu}

Our goal is to compare the \emph{value} of the robust Markov decision problem with the  \emph{value} of the non-robust Markov decision problem. To define the robust value function, for every initial value $x\in \X$, one maximizes the expected value of $\sum_{t=0}^\infty \alpha^tr(X_{t},a_t,X_{t+1})$ under the worst case measure from $\mathfrak{P}_{x,\ab}$ over all possible actions $\ab \in \A$. More precisely, we introduce the robust value function by
\begin{equation}\label{eq_robust_problem_1}
\begin{aligned}
   \X \ni x \mapsto V(x):=\sup_{\ab \in \mathcal{A}}\inf_{\PP \in \mathfrak{P}_{x,\ab}} \left(\E_{\PP}\bigg[\sum_{t=0}^\infty \alpha^tr(X_{t},a_t,X_{t+1})\bigg]\right).
\end{aligned}
\end{equation}
To define the non-robust value function under the true but to the agent unknown probability kernel $\PP^{\operatorname{true}}$ contained in the ambiguity set $\mathcal{P}$, we impose the following assumptions on $\PP^{\operatorname{true}}$.
\begin{s_asu}[Assumptions on the true probability kernel]\label{asu_3}
Let $q\in \N$ be defined in \eqref{eq_definition_wasserstein_ball}.
Then the true (but unknown) probability kernel $\X \times A \ni(x,a) \mapsto \PP^{\operatorname{true}}(x,a) \in  (\mathcal{M}^q_1(\X),\tau_q)$ satisfies the following.
\begin{itemize}
	\item[(i)] We have that $\PP^{\operatorname{true}}(x,a) \in \mathcal{P}(x,a)$ for all $(x,a)\in \X \times A$.
	\item[(ii)]	$\PP^{\operatorname{true}}$
	is $L_P$-Lipschitz with constant 
\begin{equation}\label{eq_gamma_Lp_1}
0\leq  L_P < \tfrac{1}{\alpha},
\end{equation}
where $0<\alpha<1$ is defined in Assumption~\ref{asu_alpha}, i.e., we have
\begin{equation}\label{eq_Lp_Lipschitz}
d_{W_q}\left(\PP^{\operatorname{true}}(x,a),\PP^{\operatorname{true}}(x',a')\right) \leq L_P\left(\|x-x'\|+\|a-a'\|\right) \text{ for all } x,x' \in \X,~a,a'\in A.
\end{equation}
\end{itemize}
\end{s_asu}
Then, we introduce the non-robust value function under the true (but to the agent unknown) transition kernel by
\begin{equation}\label{eq_non_robust_problem_1}
\begin{aligned}
   \X \ni x \mapsto V^{\operatorname{true}}(x):=\sup_{\ab \in \mathcal{A}} \left(\E_{{\PP}^{\operatorname{true}}_{x,\ab}}\bigg[\sum_{t=0}^\infty \alpha^tr(X_{t},a_t,X_{t+1})\bigg]\right),
\end{aligned}
\end{equation}
where we denote for any $x \in \X$, $\ab \in \mathcal{A}$, $$\PP_{x,\ab}^{\operatorname{true}}:=\delta_{x} \otimes \PP^{\operatorname{true}} \otimes \PP^{\operatorname{true}} \otimes \PP^{\operatorname{true}} \otimes \PP^{\operatorname{true}}  \cdots \in \mathcal{M}_1(\Omega).$$
Note that Assumptions~\ref{asu_2}--\ref{asu_3} ensures that the dynamic programming principle holds for both the robust and non-robust Markov decision problem, see \cite[Theorem 2.7]{neufeld2023markov}.

\section{Main Result}\label{sec_main}
As a main result we establish a bound on the difference between the value function of the Markov decision process with fixed reference measure defined in \eqref{eq_non_robust_problem_1}, and the value function of the robust Markov decision process defined in \eqref{eq_robust_problem_1}.
\begin{thm}\label{thm_main_result}
Let all Assumptions~\ref{asu_2}--~\ref{asu_3} hold true.
\begin{itemize}
\item[(i)]
Then, for any $x_0\in \X$ we have 
\begin{equation}\label{eq_bound_main_thm}
0 \leq V^{\operatorname{true}}(x_0)-V(x_0) \leq 2 L_r \varepsilon \left(1+\alpha\right)\sum_{i=0}^\infty \alpha^i \sum_{j=0}^i(L_P)^j < \infty.
\end{equation}
\item[(ii)]
Moreover, in the special case that $\PP^{\operatorname{true}} = \widehat{\PP}$, we obtain for any $x_0\in \X$ that
\begin{equation}\label{eq_bound_main_thm_1}
0 \leq V^{\operatorname{true}}(x_0)-V(x_0) \leq  L_r \varepsilon \left(1+\alpha\right)\sum_{i=0}^\infty \alpha^i \sum_{j=0}^i(L_P)^j < \infty.
\end{equation}
 \end{itemize}
\end{thm}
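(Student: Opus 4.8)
The plan is to recast both value functions as fixed points of their respective Bellman operators and transfer the pointwise distance between these operators through a monotone iteration. For measurable $f:\X\to\R$ define the robust and non-robust operators
\begin{align*}
\mathcal{T}f(x)&:=\sup_{a\in A}\inf_{\PP\in\mathcal{P}(x,a)}\int_\X\big(r(x,a,y)+\alpha f(y)\big)\,\PP(\D y),\\
\mathcal{T}^{\operatorname{true}}f(x)&:=\sup_{a\in A}\int_\X\big(r(x,a,y)+\alpha f(y)\big)\,\PP^{\operatorname{true}}(x,a)(\D y).
\end{align*}
By the dynamic programming principle \cite[Theorem 2.7]{neufeld2023markov} one has $V=\mathcal{T}V$ and $V^{\operatorname{true}}=\mathcal{T}^{\operatorname{true}}V^{\operatorname{true}}$, and the robust value iteration $\mathcal{T}^nV^{\operatorname{true}}$ converges to $V$. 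Both operators are monotone and obey the constant-shift identity $\mathcal{T}(f+c)=\mathcal{T}f+\alpha c$ for $c\ge 0$, and since $\PP^{\operatorname{true}}(x,a)\in\mathcal{P}(x,a)$ by Assumption~\ref{asu_3}(i) we always have $\mathcal{T}f\le\mathcal{T}^{\operatorname{true}}f$; this immediately yields the lower bound $V\le V^{\operatorname{true}}$, i.e.\ the left inequalities in (i) and (ii).

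First I would establish the auxiliary estimate that $V^{\operatorname{true}}$ is Lipschitz with constant $L_{V^{\operatorname{true}}}=\tfrac{L_r(1+L_P)}{1-\alpha L_P}$. Running the non-robust value iteration from $0$ and tracking how the operator propagates Lipschitz constants, one splits $\mathcal{T}^{\operatorname{true}}f(x)-\mathcal{T}^{\operatorname{true}}f(x')$ into a reward part (bounded by $L_r\|x-x'\|$ via Assumption~\ref{asu_2}(i)) and a kernel part (bounded by $(L_r+\alpha L_f)\,d_{W_q}(\PP^{\operatorname{true}}(x,a),\PP^{\operatorname{true}}(x',a))\le(L_r+\alpha L_f)L_P\|x-x'\|$ via Kantorovich--Rubinstein duality together with $d_{W_1}\le d_{W_q}$ and Assumption~\ref{asu_3}(ii)). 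This gives the recursion $L_{n+1}\le L_r(1+L_P)+\alpha L_P L_n$, whose fixed point is finite because $\alpha L_P<1$ by Assumption~\ref{asu_3}(ii), and equals $L_{V^{\operatorname{true}}}$.

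The heart of the argument is the one-step comparison: for any $L_f$-Lipschitz $f$,
\[
0\le \mathcal{T}^{\operatorname{true}}f(x)-\mathcal{T}f(x)\le 2\varepsilon\,(L_r+\alpha L_f).
\]
The lower bound is $\mathcal{T}f\le\mathcal{T}^{\operatorname{true}}f$. For the upper bound I fix a near-optimal action $a^\ast$ for $\mathcal{T}^{\operatorname{true}}f(x)$; since $y\mapsto r(x,a^\ast,y)+\alpha f(y)$ is $(L_r+\alpha L_f)$-Lipschitz, Kantorovich--Rubinstein duality bounds the difference of its integrals against $\PP^{\operatorname{true}}(x,a^\ast)$ and any $\PP\in\mathcal{P}(x,a^\ast)$ by $(L_r+\alpha L_f)\,d_{W_q}(\PP^{\operatorname{true}}(x,a^\ast),\PP)$, and the triangle inequality through the common center $\widehat{\PP}(x,a^\ast)$ gives $d_{W_q}(\PP^{\operatorname{true}}(x,a^\ast),\PP)\le 2\varepsilon$. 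In the special case $\PP^{\operatorname{true}}=\widehat{\PP}$ the center itself is $\PP^{\operatorname{true}}$, so the same estimate yields the sharper radius $\varepsilon$ in place of $2\varepsilon$, which is the sole source of the factor-$2$ improvement in (ii).

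Finally I would combine these via monotone iteration. Setting $h:=V^{\operatorname{true}}-\mathcal{T}V^{\operatorname{true}}=\mathcal{T}^{\operatorname{true}}V^{\operatorname{true}}-\mathcal{T}V^{\operatorname{true}}$, the one-step comparison applied to $f=V^{\operatorname{true}}$ gives $0\le h\le M$ with the constant $M:=2\varepsilon(L_r+\alpha L_{V^{\operatorname{true}}})$. An induction using monotonicity and the constant-shift identity then shows $0\le \mathcal{T}^kV^{\operatorname{true}}-\mathcal{T}^{k+1}V^{\operatorname{true}}\le\alpha^kM$, so telescoping and letting the iteration converge to $V$ yields $0\le V^{\operatorname{true}}-V\le\tfrac{M}{1-\alpha}$. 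Substituting $L_{V^{\operatorname{true}}}$ and simplifying gives $\tfrac{M}{1-\alpha}=\tfrac{2L_r\varepsilon(1+\alpha)}{(1-\alpha)(1-\alpha L_P)}$, and the identity $\sum_{i=0}^\infty\alpha^i\sum_{j=0}^i(L_P)^j=\tfrac{1}{(1-\alpha)(1-\alpha L_P)}$ recovers precisely the right-hand side of (i), while the $\varepsilon$-variant of the one-step comparison recovers (ii). I expect the main obstacle to be the Lipschitz estimate in the unbounded-state regime, where one must work with a weighted supremum norm (weight $1+\|x\|$) and use Assumption~\ref{asu_p}(i+) together with the constant $C_P$ satisfying $\alpha C_P<1$ from Assumption~\ref{asu_alpha} to guarantee that value iteration converges and that the integrals $\int\|y\|\,\PP(\D y)$ over the Wasserstein ball stay finite; the monotone-iteration step itself is robust to unboundedness, as it never uses a contraction in supremum norm.
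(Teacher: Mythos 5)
Your proposal is correct and reaches exactly the stated bound, but by a genuinely different route than the paper. The paper never works with the robust operator $\mathcal{T}f=\sup_a\inf_{\PP\in\mathcal{P}(x,a)}\int(r+\alpha f)\,\D\PP$ directly: it invokes the existence of a \emph{worst-case kernel} $\PP^{\operatorname{wc}}$ (from \cite[Theorem 2.7(iii)]{neufeld2023markov}) so that $V=\lim_n(\mathcal{T}^{\operatorname{wc}})^n v$ for a bounded $L_r$-Lipschitz $v$, and then proves by induction on $n$ a bound on $|(\mathcal{T}^{\operatorname{wc}})^n v-(\mathcal{T}^{\operatorname{true}})^n v|$, combining the Lipschitz constants of the iterates $(\mathcal{T}^{\operatorname{true}})^{n-1}v$ with the $2\varepsilon$ Wasserstein bound; this is where the double sum $\sum_i\alpha^i\sum_j L_P^j$ appears naturally. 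You instead use the classical fixed-point perturbation argument: a one-step comparison $0\le\mathcal{T}^{\operatorname{true}}f-\mathcal{T}f\le 2\varepsilon(L_r+\alpha L_f)$ applied at $f=V^{\operatorname{true}}$, followed by monotonicity and the constant-shift identity to get $V^{\operatorname{true}}-V\le M/(1-\alpha)$; your algebra checks out, since $L_r+\alpha L_{V^{\operatorname{true}}}=L_r(1+\alpha)/(1-\alpha L_P)$ and indeed $\sum_{i=0}^\infty\alpha^i\sum_{j=0}^i L_P^j=\tfrac{1}{(1-\alpha)(1-\alpha L_P)}$ (also for $L_P\in\{0,1\}$). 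What your approach buys is that you avoid the worst-case-kernel selection result entirely and only need a single application of the one-step estimate; what it costs is that you must (a) pass the Lipschitz property to the limit $V^{\operatorname{true}}$ itself rather than only to finite iterates, and (b) justify that the robust value iteration converges to $V$ when \emph{started from} $V^{\operatorname{true}}$, which in the unbounded-reward regime is not a $C_b$ function — the paper sidesteps both issues by always iterating from a bounded Lipschitz $v$ and only ever comparing finite iterates. You correctly flag (b) as the delicate point; it is covered by the weighted-norm (weight $1+\|x\|$) version of \cite[Theorem 2.7]{neufeld2023markov} once one notes that a Lipschitz $V^{\operatorname{true}}$ has at most linear growth, but this needs to be said explicitly for the proof to be complete.
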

We highlight that the upper bound from \eqref{eq_bound_main_thm} depends only on $\varepsilon, \alpha$, and the Lipschitz-constants $L_r$ and $L_P$. In particular, the upper bound depends linearly on the radius $\varepsilon$ of the Wasserstein-ball and is independent of the current state $x_0$ and the dimensions $d$ and $m$ of the state and action space, respectively.

\begin{rem}The assertion from Theorem~\ref{thm_main_result} also carries over to the case of autocorrelated time series where one assumes that the past $h \in \N \cap [2,\infty)$  values of a time series $(Y_{t})_{t \in \{-h,\dots-1,0,1\dots\}}$ taking values in some closed subset $\mathcal{Y}$ of $\R^{D}$ for some $D\in \N$ may have an influence on the next value. This can be modeled by defining the state process $X_t:=(Y_{t-h+1},\dots,Y_t) \in \mathcal{Y}^h=:\mathcal{X} $, $t\in \N_0$. In this setting, the subsequent state $ X_{t+1}=(Y_{t-h+2},\dots,Y_{t+1}) $ shares $h-1$ components with the preceding state $X_t=(Y_{t-h+1},\dots,Y_t)$ and uncertainty is only inherent in the last component $Y_{t+1}$. Thus, we consider a reference kernel of the form $\X \times A \ni (x,a) \mapsto \PP^{\operatorname{true}}(x,a)  = \delta_{\pi(x)} \otimes \widetilde{\PP}^{\operatorname{true}}(x,a) \in \mathcal{M}_1(\X)$, where $\widetilde{\PP}^{\operatorname{true}}(x,a) \in \mathcal{M}_1(\mathcal{Y})$ and $\X \ni (x_1,\dots,x_h) \mapsto \pi(x):= (x_2,\dots,x_h)$ denotes the projection on the last $h-1$ components.
In this setting, for $q\in \N$ and  $\varepsilon>0$, the ambiguity set is given by
\begin{equation*}
\begin{aligned}
\X \times A \ni (x,a) \twoheadrightarrow \mathcal{P}(x,a):=\Bigg\{&\PP \in \mathcal{M}_1(\mathcal{X})~\text{ s.t. }~\\
&\PP= \delta_{\pi (x)} \otimes \widetilde{\PP}\text{ for some } \widetilde{\PP} \in \mathcal{M}_1(\mathcal{Y}) \text{ with} ~W_q(\widetilde{\PP},\widetilde{\PP}^{\operatorname{true}}(x,a))\leq \varepsilon \Bigg\}.
\end{aligned}
\end{equation*}
The described setting is discussed in more detail in \cite[Section 3.3]{neufeld2023markov} or \cite[Section 2.2]{neufeld2022robust}. Typical applications can be found in finance and include portfolio optimization, compare \cite[Section 4]{neufeld2023markov}.
\end{rem}

\begin{exa}[Coin Toss]\label{exa_toin_coss}
To illustrate the applicability of Theorem 3.1, we study an example similar to the one provided in \cite[Example 4.1]{neufeld2022robust}. To this end, we consider an agent who at each time tosses $10$ coins and observes the number of heads. Thus, we  model the environment by a state space $\mathcal{X}:=\{0,\dots,10\}$. Prior to the toss, the agent can bet whether in the next toss of $10$ coins the sum of heads will be smaller ($a=-1$) or larger $(a=1)$ than the previous toss. She gains $1\$$ if the bet is correct and in turn has to pay $1\$$ if it is not (without being rewarded/punished if the sum of heads remains the same). Moreover, the agent can also
decide not to bet for the toss (by choosing $a=0$). We model this via the reward function
\[
\X \times A \times \X \ni (x,a,x') \mapsto r(x,a,x'):=a\one_{\{x<x'\}}-a\one_{\{x>x'\}},
\]
where the possible actions are given by $ A:=\{-1,0,1\}$.
The reference measure in this setting assumes a fair coin, and therefore (independent of the state action pair) is a  binomial distribution with $n=10,p=0.5$, i.e., 
$$\X \times A \ni (x,a) \mapsto \PP^{\operatorname{true}}(x,a)=\widehat{\PP}(x,a):= \operatorname{Bin}(10,0.5).$$
In the described setting it is easy to see that $r$ is Lipschitz-continuous with Lipschitz constant
$$
L_r = \left(\max_{y_0,y_0',x_1,x_1'\in \mathcal{X},~b,b'\in A \atop (y_0,b,x_1) \neq (y_0',b',x_1')} \frac{|r(y_0,b,x_1)-r(y_0',b',x_1')|}{\|y_0-y_0'\|+\|b-b'\|+\|x_1-x_1'\|}\right) =1.
$$
Moreover, we have $L_P=0$.
In Figure~\ref{fig_coin_toss} we plot the corresponding upper bound from \eqref{eq_bound_main_thm_1} against the difference  $V^{\operatorname{true}}(x_0)-V(x_0)$ for different initial values $x_0$ and different levels of $\varepsilon$ used for the computation of $V$ with $\alpha =0.45$. The value functions are computed using the robust $Q$-learning algorithm proposed in \cite{neufeld2022robust}. The used code can be found under \href{https://github.com/juliansester/MDP_Bound}{https://github.com/juliansester/MDP$\_$Bound}.
\begin{figure}[htb!]
\includegraphics[scale=0.5]{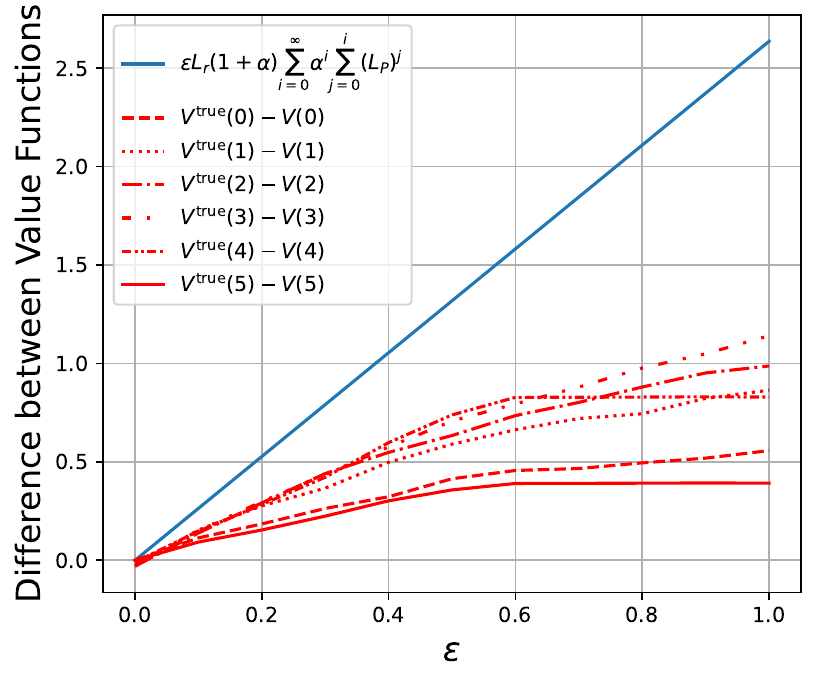}
\label{fig_coin_toss}
\caption{The difference between the non-robust and the robust value function compared with the upper bound from \eqref{eq_bound_main_thm_1} in the setting described in Example~\ref{exa_toin_coss} in dependence of $\varepsilon>0$ and for different initial values of the MDP. Initial values larger than $5$ are omitted due to the setting-specific symmetry $V(x_0)-V^{\rm true}(x_0) = V(10-x_0)-V^{\rm true}(10-x_0) $ for $x_0\in \{0,1,\dots,10\}$. }
\end{figure}

\end{exa}

\section{Proof of the main result}\label{sec_proofs}
In Section~\ref{sec_auxiliary} we provide several auxiliary lemmas which are necessary to establish the proof of Theorem~\ref{thm_main_result} reported in Section~\ref{sec_proof_main}.
\subsection{Auxiliary Results}\label{sec_auxiliary}
\begin{lem}\label{lem_1}
 Let $r:\X \times A \times \X \rightarrow \R$ satisfy  Assumption~\ref{asu_2}. Let  $\X \times A \ni (x,a) \mapsto \PP^{\operatorname{true}}(x,a)(\D x_1) \in \left(\mathcal{M}_1^q(\X), \tau_q\right)$ satisfy Assumption~\ref{asu_3}. For any\footnote{We denote here and in the following by $C_b(\X, \R)$ the set of continuous and bounded functions from $\X$ to $\R$.} $v \in C_b(\X, \R)$ define 
\begin{equation}\label{eq_defn_T_true}
\mathcal{T}^{\operatorname{true}}{v}(x_0):= \sup_{a \in A}{\int_{\X}}\left(r(x_0,a,{x_1})+\alpha v({x_1})\right) {\PP^{\operatorname{true}}(x_0,a)(\D x_1)},\qquad x_0 \in \X. 
\end{equation}
Then, for any $v \in C_b(\X, \R)$ being $L_r$-Lipschitz, $n \in \N$, $x_0,x_0'\in \X$, we have 
\begin{equation}\label{eq_lem_1}
\bigg|\left(\mathcal{T}^{\operatorname{true}}\right)^nv(x_0)-\left(\mathcal{T}^{\operatorname{true}}\right)^nv(x_0')\bigg| \leq L_r \left(1+L_P(1+\alpha) \sum_{i=0}^{n-1} \alpha^i L_P^i\right)\|x_0-x_0'\|.
\end{equation}
\end{lem}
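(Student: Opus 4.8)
The plan is to argue by induction on $n$, controlling in each step the modulus of continuity of a single application of $\mathcal{T}^{\operatorname{true}}$ by means of an optimal Wasserstein coupling. Throughout I write $w_n := (\mathcal{T}^{\operatorname{true}})^n v$ and denote the claimed constant by $L_n := L_r\big(1 + L_P(1+\alpha)\sum_{i=0}^{n-1}\alpha^i L_P^i\big)$, so that the goal becomes to show that each $w_n$ is $L_n$-Lipschitz. Before running the induction I would record that $\mathcal{T}^{\operatorname{true}}$ is well-defined on \emph{Lipschitz} (not necessarily bounded) functions: if $w$ is $\ell$-Lipschitz, then $|w(x_1)| \le |w(x_0)| + \ell\|x_1-x_0\|$ and $r(x_0,a,x_1) \le |r(x_0,a,x_0)| + L_r\|x_1-x_0\|$ are both $\PP^{\operatorname{true}}(x_0,a)$-integrable because $\PP^{\operatorname{true}}(x_0,a) \in \mathcal{M}_1(\X)$; this is what lets the induction proceed even when the iterates leave $C_b(\X,\R)$.

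The core estimate is one application of the operator to an arbitrary $\ell$-Lipschitz function $w$. Fixing $x_0,x_0' \in \X$ and using the elementary inequality $|\sup_a f(a) - \sup_a g(a)| \le \sup_a |f(a)-g(a)|$, it suffices to bound, uniformly in $a \in A$, the difference of the two integrals $\int(r(x_0,a,x_1)+\alpha w(x_1))\,\PP^{\operatorname{true}}(x_0,a)(\D x_1)$ and $\int(r(x_0',a,x_1)+\alpha w(x_1))\,\PP^{\operatorname{true}}(x_0',a)(\D x_1)$. I would introduce an optimal coupling $\pi \in \Pi\big(\PP^{\operatorname{true}}(x_0,a),\PP^{\operatorname{true}}(x_0',a)\big)$ for $d_{W_q}$ and express both integrals against $\pi$. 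The Lipschitz continuity of $r$ from Assumption~\ref{asu_2}(i) (with the same action $a$, so the $A$-variable drops out) and of $w$ bound the integrand pointwise by $L_r\|x_0-x_0'\| + (L_r+\alpha\ell)\|x_1-y_1\|$, whence integrating against $\pi$ yields $L_r\|x_0-x_0'\| + (L_r+\alpha\ell)\int\|x_1-y_1\|\,\D\pi$.

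The transport term is treated next. For $q=1$ it equals $d_{W_1}\big(\PP^{\operatorname{true}}(x_0,a),\PP^{\operatorname{true}}(x_0',a)\big)$, while for $q>1$ Jensen's inequality (equivalently $d_{W_1} \le d_{W_q}$) gives $\int\|x_1-y_1\|\,\D\pi \le d_{W_q}\big(\PP^{\operatorname{true}}(x_0,a),\PP^{\operatorname{true}}(x_0',a)\big)$; in both cases Assumption~\ref{asu_3}(ii) bounds this by $L_P\|x_0-x_0'\|$. This yields the one-step conclusion that $\mathcal{T}^{\operatorname{true}}w$ is Lipschitz with constant $L_r(1+L_P)+\alpha L_P\,\ell$. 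Choosing $\ell = L_r$ settles the base case $n=1$, since $L_r(1+L_P)+\alpha L_P L_r = L_r\big(1+L_P(1+\alpha)\big) = L_1$, and choosing $\ell = L_n$ together with the induction hypothesis delivers the recursion $L_{n+1} = L_r(1+L_P) + \alpha L_P L_n$.

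Finally I would verify that this recursion reproduces the closed form, by substituting $L_n = L_r\big(1+L_P(1+\alpha)\sum_{i=0}^{n-1}\alpha^i L_P^i\big)$ and reindexing: the term $\alpha L_P \cdot L_r L_P(1+\alpha)\alpha^i L_P^i = L_r L_P(1+\alpha)\alpha^{i+1}L_P^{i+1}$ shifts the sum $\sum_{i=0}^{n-1}$ to $\sum_{j=1}^{n}$, and the remaining $L_r(1+L_P(1+\alpha))$ supplies the $j=0$ term, producing exactly $L_{n+1}$. I expect the main obstacle to be not the algebra but the uniform handling of the transport cost across all $q \in \N$ through the single Jensen step, together with the bookkeeping needed to keep the coupling estimate valid when $r$ and the iterates $w_n$ are unbounded; the integrability noted at the outset (and the dynamic programming framework of \cite[Theorem 2.7]{neufeld2023markov}) is what guarantees every quantity above is finite.
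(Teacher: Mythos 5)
Your proposal is correct and follows essentially the same route as the paper: an induction on $n$ in which one application of $\mathcal{T}^{\operatorname{true}}$ is controlled via an optimal coupling, the bound $d_{W_1}\leq d_{W_q}$, and the Lipschitz constants of $r$, $v$, and $\PP^{\operatorname{true}}$, yielding the recursion $L_{n+1}=L_r(1+L_P)+\alpha L_P L_n$ whose closed form is \eqref{eq_lem_1}. Isolating the one-step estimate for an arbitrary $\ell$-Lipschitz function (and noting integrability when the iterates leave $C_b(\X,\R)$) is a tidy packaging of what the paper does inline, not a different argument.
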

\begin{proof}
For any $x_0,x_0'\in \X$, $a \in A$ let ${\Pi^{\operatorname{true}}}(\D {x_1}, \D {x_1'}) \in \mathcal{M}_1(\X \times \X)$ denote an optimal coupling between $\PP^{\operatorname{true}}(x_0,a)$ and $\PP^{\operatorname{true}}(x_0',a)$ w.r.t.\,$d_{W_1}$, i.e., 
\begin{equation}\label{eq_wasserstein_L_P}
\begin{aligned}
\int_{\X\times \X} \big\|{x_1}-{x_1'}\big\|~{\Pi^{\operatorname{true}}} (\D {x_1},\D {x_1'})&=d_{W_1}(\PP^{\operatorname{true}}(x_0,a),\PP^{\operatorname{true}}(x_0',a))\\
&\leq d_{W_q}(\PP^{\operatorname{true}}(x_0,a),\PP^{\operatorname{true}}(x_0',a)),
\end{aligned}
\end{equation}
where the inequality follows from Hölders's inequality, cf., e.g., \cite[Remark 6.6]{villani2008optimal}.
We prove the claim by induction. We start with the base case $n=1$, and compute by using the Lipschitz continuity of the functions $r$ and $v$ and of $\PP^{\operatorname{true}}$ that
\begin{align*}
&\bigg|\left(\mathcal{T}^{\operatorname{true}}\right)v(x_0)-\left(\mathcal{T}^{\operatorname{true}}\right)v(x_0')\bigg|\\
&=\bigg|\sup_{a \in A}{\int_{\X}\left(r(x_0,a,x_1)+\alpha v(x_1)\right) \PP^{\operatorname{true}}(x_0,a)(\D x_1)}-\sup_{a \in A}{\int_{\X}\left(r(x_0',a,x_1')+\alpha v(x_1')\right) \PP^{\operatorname{true}}(x_0',a)(\D x_1')}\\
&\leq \sup_{a \in A}\int_{\X\times \X} \bigg|r(x_0,a,{x_1})+\alpha v({x_1})-r(x_0',a,{x_1'})-\alpha v({x_1'})\bigg|~{\Pi^{\operatorname{true}}} (\D {x_1}, \D {x_1'})\\
&\leq L_r \|x_0-x_0'\|+L_r(1+\alpha)\sup_{a \in A}\int_{\X\times \X} \big\|{x_1}-{x_1'}\big\|~{\Pi^{\operatorname{true}}} (\D {x_1}, \D {x_1'})\\
&\leq L_r\|x_0-x_0'\|+L_r(1+\alpha)\sup_{a \in A}d_{W_q}\left(\PP^{\operatorname{true}}(x_0,a),\PP^{\operatorname{true}}(x_0',a)\right)\\
&\leq L_r \|x_0-x_0'\|+L_r(1+\alpha)L_P\|x_0-x_0'\|\\
&= L_r\big(1+(1+\alpha)L_P\big)\|x_0-x_0'\|.
\end{align*}
We continue with the induction step. Hence, let $n \in\N\cap[2,\infty)$ be arbitrary and assume that \eqref{eq_lem_1} holds for $n-1$. Then, we compute
\begin{equation}
\begin{aligned}\label{eq_Tn_diff_1}
&\bigg|\left(\mathcal{T}^{\operatorname{true}}\right)^nv(x_0)-\left(\mathcal{T}^{\operatorname{true}}\right)^nv(x_0')\bigg| \\
&\leq \sup_{a \in A}\int_{\X \times \X} \bigg|r(x_0,a,{x_1})+\alpha ({\T^{\operatorname{true}}})^{n-1}v({x_1})\\
&\hspace{4cm}-r(x_0',a,{x_1'})-\alpha ({\T^{\operatorname{true}}})^{n-1}v({x_1'})\bigg|~{\Pi^{\operatorname{true}}} (\D {x_1}, \D {x_1'})\\
&\leq L_r\|x_0-x_0'\|+L_r\sup_{a \in A}\int_{\X\times\X} \big\|{x_1}-{x_1'}\big\|~{\Pi^{\operatorname{true}}} (\D {x_1}, \D {x_1'})\\
&\hspace{2cm}+\alpha \sup_{a \in A}\int_{\X\times\X} \big|({\T^{\operatorname{true}}})^{n-1}v({x_1})-({\T^{\operatorname{true}}})^{n-1}v({x_1'})\big|~{\Pi^{\operatorname{true}}} (\D {x_1}, \D {x_1'}).
\end{aligned}
\end{equation}
Applying the induction hypothesis to \eqref{eq_Tn_diff_1} therefore yields
\begin{equation*}
\begin{aligned}\label{eq_Tn_diff_2}
&\bigg|\left(\mathcal{T}^{\operatorname{true}}\right)^nv(x_0)-\left(\mathcal{T}^{\operatorname{true}}\right)^nv(x_0')\bigg| \\
&\leq L_r \|x_0-x_0'\|+L_r\sup_{a \in A}\int_{\X\times\X} \big\|{x_1}-{x_1'}\big\|~{\Pi^{\operatorname{true}}} (\D {x_1}, \D {x_1'})\\
&\hspace{2cm}+\alpha L_r\left(1+L_P(1+\alpha)\sum_{i=0}^{n-2}\alpha^i L_P^i \right)\sup_{a \in A}\int_{\X\times\X} \big\|{x_1}-{x_1'}\big\|~{\Pi^{\operatorname{true}}} (\D {x_1}, \D {x_1'})\\
&\leq L_r \|x_0-x_0'\|+L_r\cdot L_P \|x_0-x_0'\|+\alpha L_r\left(1+L_P(1+\alpha)\sum_{i=0}^{n-2}\alpha^i L_P^i \right)L_P  \|x_0-x_0'\|\\
&=L_r \left(1+ (1+\alpha)L_P +L_P(1+\alpha)\sum_{i=0}^{n-2}\alpha^{i+1} L_P^{i+1} \right)  \|x_0-x_0'\|\\
&=L_r \left(1+L_P(1+\alpha)\sum_{i=0}^{n-1}\alpha^i L_P^{i} \right)  \|x_0-x_0'\|.
\end{aligned}
\end{equation*}
\end{proof}
\begin{lem}\label{lem_2}
Let Assumption~\ref{asu_2} 
and Assumption~\ref{asu_3} hold true.  Moreover, let
\begin{align*}
\X \times A \ni (x,a) &\mapsto \PP^{\operatorname{wc}}(x,a)  \in \mathcal{P}(x,a)
\end{align*}
denote another probability kernel contained in $\mathcal{P}(x,a)$ for each $x,a, \in \X \times A$. Furthermore, for any $v \in C_b(\X,\R)$ define 
\begin{equation}\label{eq_defn_T_wc}
 \mathcal{T}^{\operatorname{wc}}v(x_0):= \sup_{a\in A}{\int_{\X}}\left(r(x_0,a,{x_1})+\alpha v({x_1})\right) {\PP^{\operatorname{wc}}(x_0,a)(\D x_1)}, \qquad x_0 \in \X.
\end{equation}
\begin{itemize}
\item[(i)]
Then, for any $v \in C_b(\X, \R)$ being $L_r$-Lipschitz, $n \in \N$, $x_0\in \X$, we have
\begin{equation}\label{eq_lem2}
\bigg|\left( \mathcal{T}^{\operatorname{wc}}\right)^nv(x_0)-\left( \mathcal{T}^{\operatorname{true}}\right)^nv(x_0)\bigg|\leq 2 L_r \varepsilon \left(1+\alpha\right) \sum_{i=0}^{n-1}\alpha^i \sum_{j=0}^i (L_P)^j,
\end{equation}
where $\mathcal{T}^{\operatorname{true}}$ is defined in \eqref{eq_defn_T_true}. 
\item[(ii)]
Moreover,  in the special case that $\PP^{\operatorname{true}} = \widehat{\PP}$, we obtain for any $x_0\in \X$ that
\begin{equation}\label{eq_lem2_true}
\bigg|\left( \mathcal{T}^{\operatorname{wc}}\right)^nv(x_0)-\left( \mathcal{T}^{\operatorname{true}}\right)^nv(x_0)\bigg|\leq L_r \varepsilon \left(1+\alpha\right) \sum_{i=0}^{n-1}\alpha^i \sum_{j=0}^i (L_P)^j.
\end{equation}
\end{itemize}
\end{lem}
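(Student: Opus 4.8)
The plan is to establish both (i) and (ii) simultaneously by induction on $n\in\N$, the two cases differing only through a single triangle-inequality estimate. The key structural observation is that, since both $\PP^{\operatorname{true}}(x_0,a)$ and $\PP^{\operatorname{wc}}(x_0,a)$ lie in the ball $\mathcal{B}^{(q)}_\varepsilon(\widehat{\PP}(x_0,a))$, the triangle inequality for $d_{W_q}$ gives
\[
d_{W_q}\big(\PP^{\operatorname{wc}}(x_0,a),\PP^{\operatorname{true}}(x_0,a)\big)\leq d_{W_q}\big(\PP^{\operatorname{wc}}(x_0,a),\widehat{\PP}(x_0,a)\big)+d_{W_q}\big(\widehat{\PP}(x_0,a),\PP^{\operatorname{true}}(x_0,a)\big)\leq 2\varepsilon,
\]
while in case (ii), where $\PP^{\operatorname{true}}=\widehat{\PP}$, the same quantity is bounded directly by $\varepsilon$. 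The second ingredient is the elementary bound $\big|\int g\,\D\PP-\int g\,\D\PP'\big|\leq\operatorname{Lip}(g)\,d_{W_1}(\PP,\PP')\leq\operatorname{Lip}(g)\,d_{W_q}(\PP,\PP')$, valid for Lipschitz $g$ and $\PP,\PP'\in\mathcal{M}_1^q(\X)$ (the integrals being finite since these measures have finite first moment), together with $\big|\sup_a f(a)-\sup_a h(a)\big|\leq\sup_a\big|f(a)-h(a)\big|$.

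For the base case $n=1$ I would, for each fixed $a\in A$, apply the above to the map $x_1\mapsto r(x_0,a,x_1)+\alpha v(x_1)$, which is $L_r(1+\alpha)$-Lipschitz because $r$ is $L_r$-Lipschitz and $v$ is $L_r$-Lipschitz; this yields $\big|\mathcal{T}^{\operatorname{wc}}v(x_0)-\mathcal{T}^{\operatorname{true}}v(x_0)\big|\leq 2L_r\varepsilon(1+\alpha)$, which is exactly the claimed bound for $n=1$. For the induction step, setting $w^{\operatorname{wc}}:=(\mathcal{T}^{\operatorname{wc}})^{n-1}v$ and $w^{\operatorname{true}}:=(\mathcal{T}^{\operatorname{true}})^{n-1}v$, the crucial move is to insert the intermediate expression $\int\!\big(r(x_0,a,x_1)+\alpha w^{\operatorname{true}}(x_1)\big)\,\PP^{\operatorname{wc}}(x_0,a)(\D x_1)$ and split the difference into
\[
\text{(I)}:=\alpha\!\int\!\big(w^{\operatorname{wc}}-w^{\operatorname{true}}\big)\,\D\PP^{\operatorname{wc}}\qquad\text{and}\qquad\text{(II)}:=\int\!\big(r+\alpha w^{\operatorname{true}}\big)\,\D\PP^{\operatorname{wc}}-\int\!\big(r+\alpha w^{\operatorname{true}}\big)\,\D\PP^{\operatorname{true}}.
\]
This particular choice is what makes the argument work: term (I) is handled purely by the induction hypothesis, which bounds $\big|w^{\operatorname{wc}}(x_1)-w^{\operatorname{true}}(x_1)\big|$ uniformly in $x_1$, while term (II) is handled by the Lipschitz/Wasserstein bound applied to $g:=r+\alpha w^{\operatorname{true}}$, so that I only ever need the Lipschitz estimate on the $\mathcal{T}^{\operatorname{true}}$-iterates supplied by Lemma~\ref{lem_1}, and never a (missing) Lipschitz bound on the $\mathcal{T}^{\operatorname{wc}}$-iterates.

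Concretely, term (I) is at most $\alpha\cdot 2L_r\varepsilon(1+\alpha)\sum_{i=0}^{n-2}\alpha^i\sum_{j=0}^i L_P^j$, and for term (II) I would use Lemma~\ref{lem_1} to see that $w^{\operatorname{true}}$ is $L_r\big(1+L_P(1+\alpha)\sum_{i=0}^{n-2}\alpha^iL_P^i\big)$-Lipschitz, whence $g$ is $L_r\big(1+\alpha+\alpha L_P(1+\alpha)\sum_{i=0}^{n-2}\alpha^iL_P^i\big)$-Lipschitz and, after factoring out $(1+\alpha)$ and reindexing, $|\text{(II)}|\leq 2L_r\varepsilon(1+\alpha)\sum_{j=0}^{n-1}(\alpha L_P)^j$. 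The main obstacle — and essentially the only nonroutine point — is the telescoping bookkeeping showing that the sum of these two bounds equals the target $2L_r\varepsilon(1+\alpha)\sum_{i=0}^{n-1}\alpha^i\sum_{j=0}^i L_P^j$; the verification is that, after shifting $i\mapsto i+1$ in term (I), its $k$-th summand is $\alpha^k\sum_{j=0}^{k-1}L_P^j$ for $k\geq1$, while term (II) supplies the remaining $\alpha^kL_P^k$ for each $k$ together with the full $k=0$ contribution, so that the two combine to $\sum_{k=0}^{n-1}\alpha^k\sum_{j=0}^{k}L_P^j$ as required. Part (ii) then follows by repeating the identical induction with the estimate $d_{W_q}(\PP^{\operatorname{wc}},\PP^{\operatorname{true}})\leq\varepsilon$ in place of $\leq2\varepsilon$, which merely halves the leading constant.
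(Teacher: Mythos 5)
Your proposal is correct and follows essentially the same route as the paper: the same triangle-inequality bound $d_{W_q}(\PP^{\operatorname{wc}},\PP^{\operatorname{true}})\leq 2\varepsilon$ (resp.\ $\varepsilon$), the same induction with Lemma~\ref{lem_1} supplying the Lipschitz constant of the $\mathcal{T}^{\operatorname{true}}$-iterates, and the same telescoping of the resulting sums. The only cosmetic difference is that you phrase the estimate via the Kantorovich--Rubinstein bound and group the $r$-difference with the $w^{\operatorname{true}}$-Lipschitz term into a single piece, whereas the paper integrates against an explicit optimal coupling and splits the integrand into three pieces; the resulting bounds are identical term by term.
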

\begin{proof}
\begin{itemize}
\item[(i)]
For any $x_0\in \X$, $a \in A$, let ${\Pi}( \D {x_1},\D {x_1'}) \in \mathcal{M}_1(\X \times \X)$ denote an optimal coupling between  and $\PP^{\operatorname{wc}}(x_0,a)$ and $\PP^{\operatorname{true}}(x_0,a)$ w.r.t.\,$d_{W_1}$. Then, since both $\PP^{\operatorname{wc}}(x_0,a),\PP^{\operatorname{true}}(x_0,a) \in \mathcal{B}^{(q)}_\varepsilon\left(\widehat{\PP}(x_0,a)\right)$ we have
\begin{equation}\label{eq_wasserstein_2eps}
\begin{aligned}
\int_{\X\times \X} \big\|{x_1}-{x_1'}\big\|{\Pi} ( \D {x_1},\D {x_1'})&=d_{W_1}(\PP^{\operatorname{wc}}(x_0,a),\PP^{\operatorname{true}}(x_0,a))\\
&\leq d_{W_q}(\PP^{\operatorname{wc}}(x_0,a),\PP^{\operatorname{true}}(x_0,a))\leq 2 \varepsilon,
\end{aligned}
\end{equation}
where the first inequality follows from Hölders's inequality, cf., e.g., \cite[Remark 6.6]{villani2008optimal}.
We prove the claim by induction. To this end, we start with the base case $n=1$, and compute by using \eqref{eq_wasserstein_2eps} and the Lipschitz continuity of $r$, $v$, and of $\PP^{\operatorname{true}}$  that
\begin{align*}
&\bigg|\left( \mathcal{T}^{\operatorname{wc}}\right)v(x_0)-\left( \mathcal{T}^{\operatorname{true}}\right)v(x_0)\bigg| \\
&= \left|\sup_{a\in A} {\int_{\X}}\left(r(x_0,a,{x_1})+\alpha v({x_1})\right) {\PP^{\operatorname{wc}}(x_0,a)(\D x_1)}-\sup_{a\in A} {\int_{\X}}\left(r(x_0,a,{x_1'})+\alpha v({x_1'})\right) {\PP^{\operatorname{true}}(x_0,a)(\D x_1')}\right|\\
&\leq \sup_{a\in A}  \int_{\X\times\X} \big|r(x_0,a,{x_1})+\alpha v({x_1})-r(x_0,a,{x_1'})-\alpha v({x_1'})\big| ~{\Pi}  (\D {x_1},\D {x_1'})\\
&\leq L_r(1+\alpha)\sup_{a\in A}  \int_{\X\times\X} \big\|{x_1}-{x_1'}\big\| ~{\Pi} (\D {x_1},\D {x_1'})\\
&\leq L_r(1+\alpha)\sup_{a\in A}  d_{W_q}(\PP^{\operatorname{wc}}(x_0,a),\PP^{\operatorname{true}}(x_0,a)) \leq L_r(1+\alpha) \cdot 2\varepsilon.
\end{align*}
We continue with the induction step. Therefore, let $n \in\N\cap [2,\infty)$ be arbitrary and assume that \eqref{eq_lem2} holds for $n-1$. Then, we compute
\begin{align}
&\bigg|\left( \mathcal{T}^{\operatorname{wc}}\right)^nv(x_0)-\left( \mathcal{T}^{\operatorname{true}}\right)^nv(x_0)\bigg| \notag \\
&\leq \sup_{a\in A}  \int_{\X\times\X} \bigg|r(x_0,a,{x_1})+\alpha \left( \mathcal{T}^{\operatorname{wc}}\right)^{n-1}v({x_1})\\
&\hspace{3cm}-r(x_0,a,{x_1'})-\alpha  \left(\mathcal{T}^{\operatorname{true}}\right)^{n-1} v({x_1'})\bigg| ~{\Pi} (\D {x_1},\D {x_1'}) \notag \\
&\leq \sup_{a\in A}  \int_{\X\times\X} \big|r(x_0,a,{x_1})-r(x_0,a,{x_1'})
\big| ~{\Pi} (\D {x_1},\D {x_1'})
\notag \\
&\hspace{1.5cm}+\alpha \sup_{a\in A}  \int_{\X\times\X} \big|  \left( \mathcal{T}^{\operatorname{true}}\right)^{n-1}v({x_1})-\left(\mathcal{T}^{\operatorname{true}}\right)^{n-1} v({x_1'})\big| ~{\Pi} (\D {x_1},\D {x_1'})\label{eq_lem43_1}\\
&\hspace{1.5cm}+\alpha \sup_{a\in A}  \int_{\X\times\X} \big| \left(\mathcal{T}^{\operatorname{wc}}\right)^{n-1}v({x_1})- \left(\T^{\operatorname{true}}\right)^{n-1}v({x_1})\big| ~{\Pi} (\D {x_1},\D {x_1'}). \label{eq_lem43_2}
\end{align}
Applying Lemma~\ref{lem_1} to \eqref{eq_lem43_1} and the induction hypothesis to \eqref{eq_lem43_2} together with \eqref{eq_wasserstein_2eps} therefore yields
\begin{align*}
&\bigg|\left( \mathcal{T}^{\operatorname{wc}}\right)^nv(x_0)-\left( \mathcal{T}^{\operatorname{true}}\right)^nv(x_0)\bigg| \\
&\leq L_r \sup_{a \in A} \int_{\X\times\X} \big\| {x_1}- {x_1'}\big\|~ {\Pi} (\D {x_1},\D {x_1'})\\
&\hspace{1cm}+\alpha L_r\left(1+L_P(1+\alpha) \sum_{i=0}^{n-2} \alpha^i L_P^i\right) \int_{\X\times\X} \big\| {x_1}- {x_1'}\big\|~ {\Pi} (\D {x_1},\D {x_1'})\\
&\hspace{1cm}+\alpha \left(2L_r\varepsilon(1+\alpha) \sum_{i=0}^{n-2} \alpha^i \sum_{j=0}^i L_P^j\right)\\
&\leq L_r \cdot 2  \varepsilon +\alpha L_r\left(1+L_P(1+\alpha)\sum_{i=0}^{n-2} \alpha^iL_P^i\right)2\varepsilon +\alpha \left(L_r \cdot 2  \varepsilon(1+\alpha)\sum_{i=0}^{n-2} \alpha^i \sum_{j=0}^i L_P^j\right)\\
&= 2L_r  \varepsilon(1+\alpha)\left(1+\alpha L_P\sum_{i=0}^{n-2} \alpha^iL_P^i+\sum_{i=0}^{n-2} \alpha^{i+1} \sum_{j=0}^i L_P^j\right)\\
&= 2L_r \varepsilon(1+\alpha)\left(\sum_{i=0}^{n-1} \alpha^iL_P^i+\sum_{i=1}^{n-1} \alpha^{i} \sum_{j=0}^{i-1} L_P^j\right) = 2L_r \varepsilon(1+\alpha)\left(\sum_{i=0}^{n-1} \alpha^{i} \sum_{j=0}^{i}L_P^j\right).
\end{align*}
\item[(ii)]
In the case $\PP^{\operatorname{true}} = \widehat{\PP}$ we have for  any $x_0\in \X$, $a \in A$ that
\begin{equation}\label{eq_better_inequality}
d_{W_q}(\PP^{\operatorname{wc}}(x_0,a),\PP^{\operatorname{true}}(x_0,a))\leq \varepsilon,
\end{equation}
since the ambiguity set $\mathcal{P}(x_0,a)$ is centered around $\PP^{\operatorname{true}}(x_0,a)=\widehat{\PP}(x_0,a)$.
Hence, replacing \eqref{eq_wasserstein_2eps} by \eqref{eq_better_inequality} and then following the proof of (i) shows the assertion.

\end{itemize}
\end{proof}

\begin{lem}\label{lem_3} Let $0<\alpha<1$ and $L_P\geq 0$ satisfy $\alpha \cdot L_P <1$. Then 
\begin{equation}
\sum_{i=0}^\infty \alpha^i \sum_{j=0}^i (L_P)^j < \infty.
\end{equation}
\end{lem}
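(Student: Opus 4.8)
The plan is to prove convergence of the double sum $\sum_{i=0}^\infty \alpha^i \sum_{j=0}^i (L_P)^j$ by first obtaining a closed-form (or easily bounded) expression for the inner finite geometric sum, and then summing the resulting series in $i$ using the hypothesis $\alpha L_P < 1$ together with $0 < \alpha < 1$. I would split the argument into the two natural cases depending on whether $L_P = 1$ or $L_P \neq 1$, since the inner geometric sum behaves differently in those regimes.

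First I would treat the case $L_P \neq 1$. Here the inner sum is a genuine geometric series, so
\[
\sum_{j=0}^i (L_P)^j = \frac{1 - (L_P)^{i+1}}{1 - L_P}.
\]
Substituting this into the outer sum gives
\[
\sum_{i=0}^\infty \alpha^i \cdot \frac{1 - (L_P)^{i+1}}{1 - L_P}
= \frac{1}{1 - L_P}\left( \sum_{i=0}^\infty \alpha^i - L_P \sum_{i=0}^\infty (\alpha L_P)^i \right).
\]
The first series converges because $0 < \alpha < 1$, yielding $\frac{1}{1-\alpha}$, and the second converges because $\alpha L_P < 1$, yielding $\frac{1}{1-\alpha L_P}$. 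Hence the whole expression is finite, which establishes the claim in this case.

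Next I would handle the degenerate case $L_P = 1$, where the inner sum collapses to $\sum_{j=0}^i 1 = i+1$. Then the outer series becomes $\sum_{i=0}^\infty (i+1)\alpha^i$, which is a standard convergent power series for $|\alpha| < 1$ (with value $\frac{1}{(1-\alpha)^2}$); convergence follows for instance from the ratio test or from differentiating the geometric series. Alternatively, to unify both cases and avoid the case distinction entirely, I could simply bound the inner sum crudely and invoke the hypothesis directly: since $\sum_{j=0}^i (L_P)^j \leq (i+1)\max(1, (L_P)^i)$, one reduces everything to the convergence of $\sum_i (i+1)\alpha^i$ (when $L_P \le 1$) or to comparison with $\sum_i \alpha^i (L_P)^{i+1}/(L_P-1)$ controlled by $\alpha L_P<1$ (when $L_P>1$).

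There is no real obstacle here; the only point requiring the slightest care is ensuring that the hypothesis $\alpha L_P < 1$ is actually used (it is what controls the tail when $L_P > 1$, where the inner geometric sum grows like $(L_P)^i$ and must be damped by $\alpha^i$), and that the $L_P = 1$ boundary case is not overlooked, since there the inner sum is not geometric. Beyond that the estimate is entirely elementary. I expect the cleanest write-up to separate $L_P < 1$, $L_P = 1$, and $L_P > 1$, or equivalently to compute the closed form directly and note finiteness of each resulting geometric factor.
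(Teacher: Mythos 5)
Your proof is correct. Your primary route is genuinely different from the paper's: you compute the inner geometric sum in closed form, $\sum_{j=0}^i (L_P)^j = \frac{1-(L_P)^{i+1}}{1-L_P}$ for $L_P\neq 1$, and then split the outer series into the two geometric series $\sum_i \alpha^i$ and $\sum_i(\alpha L_P)^i$, each convergent by $0<\alpha<1$ and $\alpha L_P<1$ respectively, handling $L_P=1$ separately via $\sum_i(i+1)\alpha^i$. The paper instead avoids any case distinction by dominating the double sum with the single series $\sum_{i=0}^\infty (i+1)\,\alpha^i\max\{1,L_P\}^i$ and applying d'Alembert's ratio test, whose limit $\alpha\max\{1,L_P\}<1$ packages both hypotheses at once --- this is essentially the ``alternative'' you sketch at the end of your proposal. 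Your closed-form route has the minor advantage of producing an explicit value for the sum (e.g.\ $\frac{1}{1-L_P}\bigl(\frac{1}{1-\alpha}-\frac{L_P}{1-\alpha L_P}\bigr)$ when $L_P\neq 1$), at the cost of the three-way case split; the paper's comparison argument is shorter and uniform but yields only finiteness, which is all the main theorem needs. Both arguments are complete and use the hypotheses in the same essential way.
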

\begin{proof}
Note that 
\begin{equation}\label{eq_proof_lem_3_1}
0 \leq \sum_{i=0}^\infty \alpha^i \sum_{j=0}^i (L_P)^j \leq  \sum_{i=0}^\infty (i+1) \cdot \alpha^i \max\{1,L_P\}^i = :\sum_{i=0}^\infty a_i,
\end{equation}
with $a_i = (i+1) \cdot \alpha^i \max\{1,L_P\}^i $. Moreover
\[
\frac{a_{i+1}}{a_i} = \frac{(i+2) \cdot \alpha^{i+1} \max\{1,L_P\}^{i+1}}{(i+1) \cdot \alpha^i \max\{1,L_P\}^i}= \frac{i+2}{i+1} \cdot \alpha \cdot \max\{1,L_P\} \rightarrow \alpha \cdot \max\{1,L_P\} < 1 \text{ as } i \rightarrow \infty.
\]
Hence, d'Alembert's criterion implies that $\sum_{i=0}^{\cdot } a_i$ converges absolutely. Thus, by \eqref{eq_proof_lem_3_1}, we have $\sum_{i=0}^\infty \alpha^i \sum_{j=0}^i (L_P)^j <\infty$.
\end{proof}
\begin{lem}\label{le:DPP-holds}
	Let Assumptions~\ref{asu_2}--~\ref{asu_alpha} hold true. Then  $\mathcal{P}(x,a):=\mathcal{B}^{(q)}_\varepsilon\big(\widehat{\PP}(x,a)\big)$ defined in \eqref{eq_definition_wasserstein_ball} satisfies \cite[Standing Assumption~2.2]{neufeld2023markov} and the reward function $r:\X \times A \times \X \rightarrow \R$ together with the discount factor $0<\alpha<1$ satisfy  \cite[Standing Assumption~2.4]{neufeld2023markov}.
	\\
	As a consequence,  \cite[Theorem~2.7]{neufeld2023markov} then directly implies that the dynamic programming principle holds for the robust Markov decision problem defined in \eqref{eq_robust_problem_1}.
\end{lem}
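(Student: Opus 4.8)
The plan is to verify, condition by condition, that the present setting is a special case of the framework of \cite{neufeld2023markov}, so that its dynamic programming result \cite[Theorem~2.7]{neufeld2023markov} applies verbatim. The verification splits into two independent parts: (a) the requirements on the ambiguity correspondence $(x,a)\twoheadrightarrow\mathcal{P}(x,a)$ imposed by \cite[Standing Assumption~2.2]{neufeld2023markov}, and (b) the requirements on the pair $(r,\alpha)$ imposed by \cite[Standing Assumption~2.4]{neufeld2023markov}. Throughout, the argument follows the bounded/unbounded-reward dichotomy already built into Assumptions~\ref{asu_2}--\ref{asu_alpha}; note that compactness of $A$ and the measurability structure of $\mathcal{A}$ are already part of the setting of Section~\ref{subsec_setting}.

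For part (a) I would first observe that $\mathcal{P}(x,a)=\mathcal{B}^{(q)}_\varepsilon(\widehat{\PP}(x,a))$ is nonempty (it contains its center $\widehat{\PP}(x,a)$), convex, and closed. The required weak compactness I would obtain from the uniform moment bound $d_{W_q}(\PP,\delta_0)\le\varepsilon+d_{W_q}(\widehat{\PP}(x,a),\delta_0)$, valid for every $\PP\in\mathcal{P}(x,a)$ by the triangle inequality, which forces a uniform bound on the $q$-th moments and hence tightness; Prokhorov's theorem together with the lower semicontinuity of $d_{W_q}$ under weak convergence then yields weak compactness of the ball. The continuity, and thereby Borel measurability, of the correspondence I would reduce to continuity of the map $\mu\mapsto\mathcal{B}^{(q)}_\varepsilon(\mu)$ in the appropriate set-convergence, which is a direct consequence of the triangle inequality for $d_{W_q}$, composed with the continuity of $(x,a)\mapsto\widehat{\PP}(x,a)$ guaranteed by Assumption~\ref{asu_p}(i).

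For part (b), continuity of $r$ is immediate from Assumption~\ref{asu_2}(i). In the bounded case one has $C_P=1$, and the sole remaining condition $0<\alpha<1$ is precisely Assumption~\ref{asu_alpha}. In the unbounded case the crux is the growth estimate
\[
\int_\X\|z\|\,\PP(\D z)\le\int_\X\|z\|\,\widehat{\PP}(x,a)(\D z)+d_{W_1}\!\big(\PP,\widehat{\PP}(x,a)\big)\le\int_\X\|z\|\,\widehat{\PP}(x,a)(\D z)+\varepsilon,
\]
valid for all $\PP\in\mathcal{P}(x,a)$ using $d_{W_1}\le d_{W_q}$; feeding in the Lipschitz bound on $\widehat{\PP}$ from Assumption~\ref{asu_p}(i+) to control the $\|x\|$-dependence of the right-hand side reproduces exactly the constant $C_P$ of Assumption~\ref{asu_alpha}, so that $0<\alpha<1/C_P$ supplies the contraction/finiteness condition demanded by \cite{neufeld2023markov}.

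The main obstacle I expect is the bookkeeping in the unbounded case: one must show that the worst-case first moment, propagated one step through the transition and maximized over the \emph{entire} ambiguity ball, grows at most linearly in $\|x\|$ with precisely the rate encoded in $C_P$, and that this rate is the one needed to make the corresponding weighted supremum-norm Bellman operator of \cite{neufeld2023markov} a contraction. Once the displayed moment estimate is matched against the definition of $C_P$ in Assumption~\ref{asu_alpha}, the remaining hypotheses are routine, and the dynamic programming principle for the robust problem \eqref{eq_robust_problem_1} follows directly from \cite[Theorem~2.7]{neufeld2023markov}.
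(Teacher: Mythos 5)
Your overall strategy is the same as the paper's: verify \cite[Standing Assumptions~2.2 and~2.4]{neufeld2023markov} case by case along the bounded/unbounded dichotomy, with the quantitative heart being the first-moment estimate $\int_\X\|z\|\,\PP(\D z)\le\int_\X\|z\|\,\widehat{\PP}(x,a)(\D z)+\varepsilon$ over the Wasserstein ball, combined with the Lipschitz bound on $\widehat{\PP}$ and an infimum over an auxiliary point $x'$ to reproduce the constant $C_P$ of Assumption~\ref{asu_alpha}; your displayed inequality and its continuation are exactly the paper's computation (which follows \cite[Proof of Proposition 4.1, Equation (6.34)]{neufeld2024non}). Where you differ is in part (a): the paper does not argue nonemptiness, convexity, compactness and continuity of the correspondence $(x,a)\twoheadrightarrow\mathcal{B}^{(q)}_\varepsilon(\widehat{\PP}(x,a))$ from first principles, but cites \cite[Proposition~3.1]{neufeld2023markov} for the entire bounded case (with $p=0$, $C_P=1$) and \cite[Proposition 4.1]{neufeld2024non} for \cite[Standing Assumption~2.2(i)]{neufeld2023markov} in the unbounded case. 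Your Prokhorov/lower-semicontinuity sketch is the standard way to prove what those propositions assert, so this is a legitimate, more self-contained route; the citation route buys brevity and avoids re-proving set-valued continuity.

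Two points in your part (a) need care in the unbounded case. First, the paper explicitly notes that Assumption~\ref{asu_2}(ii) forces $q\in[2,\infty)\cap\N$ when $r$ is unbounded; this is not decorative. Plain weak compactness of $\mathcal{B}^{(q)}_\varepsilon(\mu)$, which is all your tightness argument delivers, is what suffices for $p=0$; for the weighted framework with $p=1$ one needs compactness in a topology under which first moments converge, and that requires uniform integrability of the first moments, which is exactly what the uniform bound on $q$-th moments with $q\ge 2$ supplies (the $q$-Wasserstein ball is not compact in $\tau_q$ itself). Second, your continuity argument for the correspondence invokes only Assumption~\ref{asu_p}(i), but in the unbounded case the relevant hypothesis is the Lipschitz condition (i+), which is what both the moment estimate and \cite[Proposition 4.1]{neufeld2024non} actually use. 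Neither issue is fatal — both are repaired by the hypotheses you already have available — but as written the compactness step proves a weaker property than the one the cited framework needs.
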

\begin{proof}
First, if $r:\X \times A \times \X \rightarrow \R$ is bounded, then Assumptions~\ref{asu_2}--~\ref{asu_alpha} allows us to use  \cite[Proposition~3.1]{neufeld2023markov} which immediately ensures that the result holds true with respect to $p=0$ and $C_P=1$ in the notation of \cite[Standing Assumptions~2.2\&~2.4]{neufeld2023markov}.

Now, assume for the rest of this proof that $r:\X \times A \times \X \rightarrow \R$ is unbounded.
Then by Assumption~\ref{asu_2}(ii) we have that $q\in[2,\infty)\cap\N$. 
In this case, let $p=1$ in the notation of \cite[Standing Assumptions~2.2~\&~2.4]{neufeld2023markov}. Then our Assumptions~\ref{asu_2}~\&~\ref{asu_alpha} immediately ensure that \cite[Standing Assumption~2.4]{neufeld2023markov} holds.
Moreover, by our Assumption~\ref{asu_p}, we directly obtain from \cite[Proposition 4.1]{neufeld2024non} that \cite[Standing Assumption~2.2(i)]{neufeld2023markov} holds. Therefore, it remains to verify \cite[Standing Assumptions~2.2(ii)]{neufeld2023markov}. To that end, let
\begin{equation}\label{C_P}
C_P:= \max\Bigg\{	1+	\varepsilon
+
\sup_{a\in A}\inf_{x'\in \X}\bigg\{
\int_{\X }\|z\|\, \widehat{\PP}(x',a) (\D z) 
+
L_{\widehat{\PP}}\|x'\|
\bigg\}, L_{\widehat{\PP}}\Bigg\}<\infty.
\end{equation}
Indeed note that $C_P<\infty$, as Assumption~\ref{asu_p} ensures that
the map 
\begin{equation*}
	\X \times A\ni (x',a)\mapsto \int_{\X }\|z\|\,\widehat{\PP}(x',a)  (\D z)  +
	L_{\widehat{\PP}}\|x'\| \in [0,\infty)
\end{equation*} 
is continuous. This  implies that the map 
\begin{equation*}
	A\ni a\mapsto \inf_{x'\in \X}\bigg\{\int_{\X }\|z\|\, \widehat{\PP}(x',a) (\D z)+
	L_{\widehat{\PP}}\|x'\|\bigg\}\in [0,\infty)
\end{equation*} is upper semicontinuous,  which in turns ensures that $C_P$ is finite as $A$ is compact.
Now, let $(x,a)\in \X \times A$ and $\PP\in \mathcal{P}(x,a)=\mathcal{B}^{(q)}_\varepsilon\big(\widehat{\PP}(x,a)\big)$ be arbitrarily chosen. Then by following the calculations in \cite[Proof of Proposition 4.1, Equation (6.34)]{neufeld2024non} (with $p=1$ in the notation of  \cite{neufeld2024non}) using the Lipschitz continuity of $\widehat{\PP}$ we obtain for any arbitrary $x'\in \X$ that
\begin{equation*}
	\begin{split}
\int_{\X} 1+\|y\|\,\PP( \D y)
	&\leq
1+	\varepsilon
	+
\int_{\X }\|z\|\, \widehat{\PP}(x',a) (\D z) 
	+
	L_{\widehat{\PP}}(\|x'\|	+ \|x\|).
\end{split}
\end{equation*}
Since $x'\in \X$ was arbitrarily chosen, we see from \eqref{C_P} that
\begin{equation*}
	\begin{split}
	\int_{\X} 1+\|y\|\,\PP( \D y)
		&\leq
		1+	\varepsilon
		+
		\sup_{a\in A}\inf_{x'\in \X}\bigg\{
		\int_{\X }\|z\| \,\widehat{\PP}(x',a) (\D z) 
		+
			L_{\widehat{\PP}}\|x'\|
			\bigg\}
			+
		L_{\widehat{\PP}} \|x\|\\
		& \leq C_P(1+ \|x\|),
	\end{split}
\end{equation*}
which shows that \cite[Standing Assumption~2.2(ii)]{neufeld2023markov} indeed holds. 
\end{proof}
\subsection{Proof of Theorem ~\ref{thm_main_result}}\label{sec_proof_main}
\begin{itemize}
\item[(i)]
First note that as by assumption $\PP^{\operatorname{true}}(x,a) \in \mathcal{P}(x,a)$ for all $(x,a) \in \X \times A$, we have 
\[
0 \leq V^{\operatorname{true}}(x_0)-V(x_0) \text{ for all } x_0 \in \X.
\]
To compute the upper bound, we fix any $v \in C_b(\X, \R)$ which is $L_r$-Lipschitz and we define  the operator $\mathcal{T}^{\operatorname{true}}$ by \eqref{eq_defn_T_true}.
Then, by Lemma~\ref{le:DPP-holds} and \cite[Theorem 2.7~(ii)]{neufeld2023markov}, we have
\begin{equation}\label{eq_proof_thm_eq_3}
V^{\operatorname{true}}(x_0)= \lim_{n \rightarrow \infty} \left(\mathcal{T}^{\operatorname{true}}\right)^nv(x_0), \qquad \text{  } V(x_0)= \lim_{n \rightarrow \infty} \left(\mathcal{T}\right)^nv(x_0)
\end{equation}
for all $x_0 \in \X$ and for $\mathcal{T}$ as defined in \cite[Equation (8)]{neufeld2023markov}. Moreover, by \cite[Theorem 2.7~(iii)]{neufeld2023markov}, there exists a \emph{worst case} transition kernel $\X \times A \ni (x,a)\mapsto \PP^{\operatorname{wc}}(x,a) $ with $\PP^{\operatorname{wc}}(x,a)  \in \mathcal{P}(x,a)$ for all $(x,a) \in \X \times A$ such that, by denoting for any $\ab =(a_t)_{t\in \N_0}\in \mathcal{A}$
 $$\PP_{x_0,\ab}^{\operatorname{wc}}:=\delta_{x_0} \otimes  \PP^{\operatorname{wc}} \otimes  \PP^{\operatorname{wc}} \otimes  \PP^{\operatorname{wc}} \otimes  \PP^{\operatorname{wc}} \cdots \in \mathcal{M}_1(\Omega),$$
we have 
\begin{equation}\label{eq_proof_thm_eq_4}
V(x_0) = \sup_{\ab \in \mathcal{A}}\E_{\PP_{x_0,\ab}^{\operatorname{wc}}}\left[\sum_{t=0}^\infty \alpha^t r(X_t,a_t(X_t),X_{t+1})\right] =\lim_{n \rightarrow \infty} \left(\mathcal{T}^{\operatorname{wc}}\right)^nv(x_0),\qquad x_0 \in \X,
\end{equation}
where $\mathcal{T}^{\operatorname{wc}}$ is defined in \eqref{eq_defn_T_wc}.
Therefore by \eqref{eq_proof_thm_eq_3}, \eqref{eq_proof_thm_eq_4}, Lemma~\ref{lem_2}, and Lemma~\ref{lem_3}, we have for all $x_0 \in \X$ that
\begin{align}
V^{\operatorname{true}}(x_0)-V(x_0) &= \lim_{n \rightarrow \infty} \left(\mathcal{T}^{\operatorname{true}}\right)^nv(x_0)-\lim_{n \rightarrow \infty} \left(\mathcal{T}^{\operatorname{wc}}\right)^nv(x_0)  \notag
\\&\leq \lim_{n \rightarrow \infty} \bigg|\left(\mathcal{T}^{\operatorname{true}}\right)^nv(x_0)-\left(\mathcal{T}^{\operatorname{wc}}\right)^nv(x_0)\bigg| \notag \\
&\leq 2 L_r \varepsilon \left(1+\alpha\right)\lim_{n \rightarrow \infty}  \sum_{i=0}^{n-1} \alpha^i \sum_{j=0}^i L_P^j =  2 L_r \varepsilon \left(1+\alpha\right)\sum_{i=0}^\infty \alpha^i \sum_{j=0}^i L_P^j  < \infty. \label{eq_thm_proof_lastline}
\end{align}
\item[(ii)]
In the case $\PP^{\operatorname{true}} = \widehat{\PP}$, due to Lemma~\ref{lem_2}~(ii), we may use \eqref{eq_lem2_true} and replace inequality \eqref{eq_thm_proof_lastline}
by 
\[
V^{\operatorname{true}}(x_0)-V(x_0) \leq   L_r \varepsilon \left(1+\alpha\right)\lim_{n \rightarrow \infty}  \sum_{i=0}^{n-1} \alpha^i \sum_{j=0}^i L_P^j =   L_r \varepsilon \left(1+\alpha\right)\sum_{i=0}^\infty \alpha^i \sum_{j=0}^i L_P^j  < \infty. 
\]
\end{itemize}
\vspace{0.3cm}

\section*{Acknowledgments}
\noindent
We thank an anonymous referee of the paper \cite{neufeld2022robust} who raised a question that led to this note.\\
Moreover, financial support by the MOE AcRF Tier 1 Grant \emph{RG74/21} and by the  Nanyang Assistant Professorship Grant (NAP Grant) \emph{Machine Learning based Algorithms in Finance and Insurance} is gratefully acknowledged. 

\bibliographystyle{plain} 
\bibliography{literature}

\begin{thebibliography}{10}

\bibitem{angiuli2021reinforcement}
Andrea Angiuli, Jean-Pierre Fouque, and Mathieu Lauriere.
\newblock Reinforcement learning for mean field games, with applications to
  economics.
\newblock {\em arXiv preprint arXiv:2106.13755}, 2021.

\bibitem{bartl2022sensitivity}
Daniel Bartl and Johannes Wiesel.
\newblock Sensitivity of multiperiod optimization problems in adapted
  {W}asserstein distance.
\newblock {\em arXiv preprint arXiv:2208.05656}, 2022.

\bibitem{bauerle2021distributionally_2}
Nicole B{\"a}uerle and Alexander Glauner.
\newblock Distributionally robust {M}arkov decision processes and their
  connection to risk measures.
\newblock {\em Mathematics of Operations Research}, 2021.

\bibitem{bauerle2021q}
Nicole B{\"a}uerle and Alexander Glauner.
\newblock {Q}-learning for distributionally robust {M}arkov decision processes.
\newblock In {\em Modern Trends in Controlled Stochastic Processes:}, pages
  108--128. Springer, 2021.

\bibitem{bauerle2011markov}
Nicole B{\"a}uerle and Ulrich Rieder.
\newblock {\em {M}arkov decision processes with applications to finance}.
\newblock Springer Science \& Business Media, 2011.

\bibitem{cao2021deep}
Jay Cao, Jacky Chen, John Hull, and Zissis Poulos.
\newblock Deep hedging of derivatives using reinforcement learning.
\newblock {\em The Journal of Financial Data Science}, 3(1):10--27, 2021.

\bibitem{charpentier2021reinforcement}
Arthur Charpentier, Romuald Elie, and Carl Remlinger.
\newblock Reinforcement learning in economics and finance.
\newblock {\em Computational Economics}, pages 1--38, 2021.

\bibitem{chen2019distributionally}
Zhi Chen, Pengqian Yu, and William~B Haskell.
\newblock Distributionally robust optimization for sequential decision-making.
\newblock {\em Optimization}, 68(12):2397--2426, 2019.

\bibitem{ElGhaouiNilim2005robust}
Laurent El~Ghaoui and Arnab Nilim.
\newblock Robust solutions to {M}arkov decision problems with uncertain
  transition matrices.
\newblock {\em Operations Research}, 53(5):780--798, 2005.

\bibitem{feinberg2012handbook}
Eugene~A Feinberg and Adam Shwartz.
\newblock {\em Handbook of Markov decision processes: methods and
  applications}, volume~40.
\newblock Springer Science \& Business Media, 2012.

\bibitem{kaelbling1996reinforcement}
Leslie~Pack Kaelbling, Michael~L Littman, and Andrew~W Moore.
\newblock Reinforcement learning: A survey.
\newblock {\em Journal of artificial intelligence research}, 4:237--285, 1996.

\bibitem{kallus2020double}
Nathan Kallus and Masatoshi Uehara.
\newblock Double reinforcement learning for efficient off-policy evaluation in
  markov decision processes.
\newblock {\em The Journal of Machine Learning Research}, 21(1):6742--6804,
  2020.

\bibitem{kern2020sensitivity}
Patrick Kern.
\newblock Sensitivity and statistical inference in markov decision models and
  collective risk models.
\newblock 2020.

\bibitem{kiszka2022stability}
Adriana Kiszka and David Wozabal.
\newblock A stability result for linear markovian stochastic optimization
  problems.
\newblock {\em Mathematical Programming}, 191(2):871--906, 2022.

\bibitem{levin1998using}
Esther Levin, Roberto Pieraccini, and Wieland Eckert.
\newblock Using markov decision process for learning dialogue strategies.
\newblock In {\em Proceedings of the 1998 IEEE International Conference on
  Acoustics, Speech and Signal Processing, ICASSP'98 (Cat. No. 98CH36181)},
  volume~1, pages 201--204. IEEE, 1998.

\bibitem{li2023policy}
Mengmeng Li, Tobias Sutter, and Daniel Kuhn.
\newblock Policy gradient algorithms for robust mdps with non-rectangular
  uncertainty sets.
\newblock {\em arXiv preprint arXiv:2305.19004}, 2023.

\bibitem{liu2022distributionally}
Zijian Liu, Qinxun Bai, Jose Blanchet, Perry Dong, Wei Xu, Zhengqing Zhou, and
  Zhengyuan Zhou.
\newblock Distributionally robust {Q}-learning.
\newblock In {\em International Conference on Machine Learning}, pages
  13623--13643. PMLR, 2022.

\bibitem{mannor2016robust}
Shie Mannor, Ofir Mebel, and Huan Xu.
\newblock Robust {MDPs} with $k$-rectangular uncertainty.
\newblock {\em Mathematics of Operations Research}, 41(4):1484--1509, 2016.

\bibitem{muller1997does}
Alfred M{\"u}ller.
\newblock How does the value function of a markov decision process depend on
  the transition probabilities?
\newblock {\em Mathematics of Operations Research}, 22(4):872--885, 1997.

\bibitem{natarajan2022planning}
Mausam Natarajan and Andrey Kolobov.
\newblock {\em Planning with Markov decision processes: An AI perspective}.
\newblock Springer Nature, 2022.

\bibitem{neufeld2022robust}
Ariel Neufeld and Julian Sester.
\newblock Robust {$Q$}-learning algorithm for {M}arkov decision processes under
  wasserstein uncertainty.
\newblock {\em arXiv preprint arXiv:2210.00898}, 2022.

\bibitem{neufeld2024non}
Ariel Neufeld and Julian Sester.
\newblock Non-concave distributionally robust stochastic control in a discrete
  time finite horizon setting.
\newblock {\em arXiv preprint arXiv:2404.05230}, 2024.

\bibitem{neufeld2023markov}
Ariel Neufeld, Julian Sester, and Mario {\v{S}}iki{\'c}.
\newblock {M}arkov decision processes under model uncertainty.
\newblock {\em Mathematical Finance}, 33(3):618--665, 2023.

\bibitem{panaganti2022sample}
Kishan Panaganti and Dileep Kalathil.
\newblock Sample complexity of robust reinforcement learning with a generative
  model.
\newblock In {\em International Conference on Artificial Intelligence and
  Statistics}, pages 9582--9602. PMLR, 2022.

\bibitem{puterman1990markov}
Martin~L Puterman.
\newblock Markov decision processes.
\newblock {\em Handbooks in operations research and management science},
  2:331--434, 1990.

\bibitem{puterman2014markov}
Martin~L Puterman.
\newblock {\em {M}arkov decision processes: discrete stochastic dynamic
  programming}.
\newblock John Wiley \& Sons, 2014.

\bibitem{si2020distributional}
Nian Si, Fan Zhang, Zhengyuan Zhou, and Jose Blanchet.
\newblock Distributional robust batch contextual bandits.
\newblock {\em arXiv preprint arXiv:2006.05630}, 2020.

\bibitem{si2020distributionally}
Nian Si, Fan Zhang, Zhengyuan Zhou, and Jose Blanchet.
\newblock Distributionally robust policy evaluation and learning in offline
  contextual bandits.
\newblock In {\em International Conference on Machine Learning}, pages
  8884--8894. PMLR, 2020.

\bibitem{sutton2018reinforcement}
Richard~S Sutton and Andrew~G Barto.
\newblock {\em Reinforcement learning: An introduction}.
\newblock MIT press, 2018.

\bibitem{uugurlu2018robust}
Kerem U{\u{g}}urlu.
\newblock Robust optimal control using conditional risk mappings in infinite
  horizon.
\newblock {\em Journal of Computational and Applied Mathematics}, 344:275--287,
  2018.

\bibitem{villani2008optimal}
C{\'e}dric Villani.
\newblock {\em Optimal transport: old and new}, volume 338.
\newblock Springer, 2008.

\bibitem{wang2022policy}
Yue Wang and Shaofeng Zou.
\newblock Policy gradient method for robust reinforcement learning.
\newblock {\em arXiv preprint arXiv:2205.07344}, 2022.

\bibitem{white1993survey}
Douglas~J White.
\newblock A survey of applications of {M}arkov decision processes.
\newblock {\em Journal of the operational research society}, 44(11):1073--1096,
  1993.

\bibitem{wiesemann2013robust}
Wolfram Wiesemann, Daniel Kuhn, and Ber{\c{c}} Rustem.
\newblock Robust {M}arkov decision processes.
\newblock {\em Mathematics of Operations Research}, 38(1):153--183, 2013.

\bibitem{wiesemann2014distributionally}
Wolfram Wiesemann, Daniel Kuhn, and Melvyn Sim.
\newblock Distributionally robust convex optimization.
\newblock {\em Operations research}, 62(6):1358--1376, 2014.

\bibitem{xu2010distributionally}
Huan Xu and Shie Mannor.
\newblock Distributionally robust {M}arkov decision processes.
\newblock {\em Mathematics of Operations Research}, 37(2):288--300, 2012.

\bibitem{yang2021towards}
Wenhao Yang, Liangyu Zhang, and Zhihua Zhang.
\newblock Towards theoretical understandings of robust {M}arkov decision
  processes: Sample complexity and asymptotics.
\newblock {\em arXiv preprint arXiv:2105.03863}, 2021.

\bibitem{zahle2022concept}
Henryk Z{\"a}hle.
\newblock A concept of copula robustness and its applications in quantitative
  risk management.
\newblock {\em Finance and Stochastics}, 26(4):825--875, 2022.

\bibitem{zhou2021finite}
Zhengqing Zhou, Zhengyuan Zhou, Qinxun Bai, Linhai Qiu, Jose Blanchet, and
  Peter Glynn.
\newblock Finite-sample regret bound for distributionally robust offline
  tabular reinforcement learning.
\newblock In {\em International Conference on Artificial Intelligence and
  Statistics}, pages 3331--3339. PMLR, 2021.

\end{thebibliography}
\end{document}